\documentclass[a4paper,12pt]{amsart}
\usepackage{amsmath}
 \usepackage{latexsym, amsthm, amscd, euscript, float, subfig}
\usepackage{amssymb}
\usepackage{ifthen}
\usepackage{graphicx}
\usepackage{float}
\usepackage{cite}
\usepackage{amsfonts}
\usepackage{amscd}
\usepackage{amsxtra}
\usepackage{color}

\setlength{\textwidth}{16cm} \setlength{\oddsidemargin}{0cm}
\setlength{\evensidemargin}{0cm} \setlength{\footskip}{40pt}
\setlength{\emergencystretch}{3em}
\sloppy

\newtheorem{theorem}{Theorem}[section]
\newtheorem{lemma}[theorem]{Lemma}

\theoremstyle{definition}

\newtheorem{example}[theorem]{Example}

\numberwithin{equation}{section}

\def\be{\begin{equation}}
\def\ee{\end{equation}}

\newcounter{alphabet}


\makeatletter
\makeatother

\begin{document}
\bibliographystyle{amsplain}
\title[On the Second Hankel determinant of logarithmic coefficients]{ On the Second Hankel determinant of Logarithmic Coefficients
	for certain univalent functions}
\author[Vasudeavarao Allu]{Vasudeavarao Allu}
\address{Vasudevarao Allu, School of Basic Sciences, Indian Institute of Technology Bhubaneswar,
Bhubaneswar-752050, Odisha, India.}
\email{avrao@iitbbs.ac.in}
\author[Vibhuti Arora]{Vibhuti Arora}
\address{Vibhuti Arora, School of Basic Sciences, Indian Institute of Technology Bhubaneswar,
Bhubaneswar-752050, Odisha, India.}
\email{vibhutiarora1991@gmail.com}
\author[Amal Shaji]{Amal Shaji}
\address{Amal Shaji, School of Basic Sciences, Indian Institute of Technology Bhubaneswar,
Bhubaneswar-752050, Odisha, India.}
\email{amalmulloor@gmail.com}
\subjclass[2010]{30C45, 30C50, 30C55.}
\keywords{Univalent functions, Logarithmic coefficients, Hankel determinant, Starlike and Convex functions with respect to symmetric points, Schwarz function.}

\begin{abstract}
In this paper, we investigate the sharp bounds of the second Hankel determinant of Logarithmic coefficients  for the starlike and convex functions with respect to symmetric points in the open unit disk.
\end{abstract}

\maketitle

\section{Introduction}\label{Introduction}
Let $\mathcal{H}$ denote the class of analytic functions in the unit disk $\mathbb{D}:=\{z\in\mathbb{C}:\, |z|<1\}$. Then $\mathcal{H}$ is 
a locally convex topological vector space endowed with the topology of uniform convergence over compact subsets of $\mathbb{D}$. Let $\mathcal{A}$ denote the class of functions $f\in \mathcal{H}$ such that $f(0)=0$ and $f'(0)=1$. A function $f$ is said to be {\it{univalent}} in a domain $\Omega \subseteq \mathbb{C}$, if it is one-to-one in $\Omega$.  Let $\mathcal{S}$ 
denote the subclass of  $\mathcal{A}$ consisting of functions which are univalent ({\em i.e., one-to-one}) in $\mathbb{D}$. 
If $f\in\mathcal{S}$ then it has the following series representation
\begin{equation}\label{f}
	f(z)= z+\sum_{n=2}^{\infty}a_n z^n, \quad z\in \mathbb{D}.
\end{equation}
A function $f\in\mathcal{S}$ belongs to the class  $\mathcal{S}^*$, called {\em starlike function}, if $f(\mathbb{D})$ is a starlike domain with respect to
the origin. Moreover, a function $f\in\mathcal{S}$ is called {\em convex function} if $f(\mathbb{D})$ is a starlike domain with respect to each point. The class of such functions is denoted by $\mathcal{C}$. 
\\[3mm]

In \cite{sym}, Sakaguchi introduced the class of functions
that are starlike with respect to symmetric points.
A function $f\in \mathcal{A}$ is said to be {\em starlike with
respect to symmetric points} if for any $r$ close to $1$, $r<1$, and any $z_{0}$ on the circle $|z|=r$, the angular velocity of $f(z)$ about the point $f(-z_{0})$ is
positive at $z_{0}$ as $z$ traverses the circle $|z|=r$ in the positive direction, {\em i.e.},
$$
{\rm Re\,}\left( \cfrac{z_0f'(z_0)}{f(z_0)-f(-z_0)} \right) > 0,\quad  |z_0|=r.
$$
Denote by $\mathcal{S}_S^*$ the class of all functions in $\mathcal{S}$ which are starlike with respect
to symmetric points and, functions 
$f$ in the class $\mathcal{S}_S^*$ is characterized by
$$
{\rm Re\,}\left( \cfrac{zf'(z)}{f(z)-f(-z)} \right) > 0, \quad z \in \mathbb{D}.
$$
It is known that the functions in $\mathcal{S}_S^*$ are close-to-convex and hence are univalent (see \cite{sym}). Note that the class of functions starlike with respect to symmetric points obviously includes the classes of convex functions and odd starlike functions with respect to the origin. The notion of starlike functions with respect to $N$-symmetric points has been studied in \cite{sym}. In 2002, Nezhmetdinov and Ponnusamy  \cite{samy} proved that
$\mathcal{S}_S^*$ $\nsubseteq$  $\mathcal{S}^*$ and $\mathcal{S}^*$ $\nsubseteq$ $\mathcal{S}_S^*$. 
\\[3mm]

In 1977, Das and Singh \cite{das} defined the class of  convex functions with respect to symmetric points. A function $f$ $\in$ $\mathcal{A}$ is said to be {\em convex with respect to symmetric points} if, and only if, 
$$
{\rm Re\,}\left( \cfrac{(zf'(z))'}{(f(z)-f(-z))'} \right) > 0, \quad z \in \mathbb{D}.
$$\\[2mm]

The {\it Logarithmic coefficients} $\gamma_{n}$ of $f\in \mathcal{S}$ are defined by,
\begin{equation}\label{amal-1}
	F_{f}(z):= \log\frac{f(z)}{z}=2\sum\limits_{n=1}^{\infty}\gamma_{n}z^{n}, \quad z \in \mathbb{D}.
\end{equation}
The logarithmic coefficients $\gamma_{n}$ play a central role in the theory of univalent functions. A very few exact upper bounds for $\gamma_{n}$ seem to have been established. The significance of this problem in the context of Bieberbach conjecture was pointed by Milin\cite{milin} in his conjecture. Milin \cite{milin}
conjectured that for $f\in \mathcal{S}$ and $n\ge 2$, 
$$\sum\limits_{m=1}^{n}\sum\limits_{k=1}^{m}\left(k|\gamma_{k}|^{2}-\frac{1}{k}\right)\le 0,$$
which led De Branges, by proving this conjecture, to the proof of Bieberbach conjecture  \cite{De Branges-1985}. For the Koebe function $k(z)=z/(1-z)^{2}$, the logarithmic coefficients are $\gamma_{n}=1/n$. Since the Koebe function $k$ plays the role of extremal function for most of the extremal problems in the class $\mathcal{S}$, it is expected that $|\gamma_{n}|\le1/n$ holds for functions in $\mathcal{S}$. But this is not true in general, even in order of magnitude. Indeed, there exists a bounded function $f$ in the class $\mathcal{S}$ with logarithmic coefficients $\gamma_{n}\ne O(n^{-0.83})$ (see \cite[Theorem 8.4]{Duren-book-1983}). By differentiating \eqref{amal-1} and  the equating coefficients we obtain
\begin{equation}\label{gamma}
\begin{aligned}
& \gamma_{1}=\frac{1}{2}a_{2}, \\[2mm]
& \gamma_{2}=\frac{1}{2}(a_{3}-\frac{1}{2}a_{2}^{2}),\\[2mm]
& \gamma_{3}=\frac{1}{2}(a_{4}-a_{2}a_{3}+\frac{1}{3}a_{2}^{3}).
\end{aligned}
\end{equation}
If $f\in \mathcal{S}$, it is easy to see that $|\gamma_{1}|\le 1$, because $|a_2| \leq 2$. Using the Fekete-Szeg$\ddot{o}$ inequality \cite[Theorem 3.8]{Duren-book-1983} for functions in $\mathcal{S}$ in (1.4), we obtain the sharp estimate 
$$|\gamma_{2}|\le\frac{1}{2}\left(1+2e^{-2}\right)=0.635\ldots.$$
For $n\ge 3$, the problem seems much harder, and no significant bound for $|\gamma_{n}|$ when $f\in \mathcal{S}$ appear to be known. In 2017, Ali and Allu\cite{vasu2017} obtained the initial logarithmic coefficients bounds for close-to-convex functions. In 2020, Ponnusamy {\it et al.}  \cite{PSW20} computed the sharp estimates for the initial three logarithmic coefficients for a subclass of $\mathcal{S}^*$.  The problem of computing the bound of the logarithmic coefficients is also considered in \cite{cho,vasu-2018,Thomas-2016} for several subclasses of close to convex functions. In 2021, Zaprawa \cite{pawel} obtained the sharp bounds of the initial logarithmic coefficients $|\gamma_n|$ for functions in the classes $\mathcal{S}_S^*$ and 
$\mathcal{K}_S$.
\\[3mm]

For $q,n \in \mathbb{N}$, the Hankel determinant $H_{q,n}(f)$ of Taylor’s coefficients of function $f \in \mathcal{A}$ of the form \eqref{f} is defined by
$$
H_{q,n}(f) =
\begin{vmatrix}
	a_n & a_{n+1}  & \cdots & a_{n+q-1} \\ 
	a_{n+1} & a_{n+2}  & \cdots & a_{n+q} \\
	\vdots & \vdots &  \ddots &\vdots \\
	a_{n+q-1} & a_{n+q}  & \cdots & a_{n+2(q-1)}
\end{vmatrix}.
$$
The Hankel determinant for various order is also studied recently by several authors in different contexts;
for instance see \cite{Pom66,Pom67,ALT, sim21}.
One can easily observe that the Fekete-Szeg$\ddot{o}$ functional is the second Hankel determinant $H_{2,1}(f)$. Fekete-Szeg$\ddot{o}$  then
further generalized the estimate $|a_3 - \mu a_2
^2|$ with $\mu$ real for $f \in \mathcal{S}$ \cite[Theorem 3.8]{Duren-book-1983}. \\[3mm]

Identifying the widespread applications of logarithmic coefficients, recently, Kowalczyk and Lecko \cite{adam} together proposed the study of the Hankel determinant whose entries are logarithmic coefficients of $ f \in \mathcal{S}$, which is given by

$$
H_{q,n}(F_f/2) =
\begin{vmatrix}
	\gamma_n & \gamma_{n+1}  & \cdots & \gamma_{n+q-1} \\ 
	\gamma_{n+1} & \gamma_{n+2}  & \cdots & \gamma_{n+q} \\
	\vdots & \vdots &  \ddots &\vdots \\
	\gamma_{n+q-1} & \gamma_{n+q}  & \cdots & \gamma_{n+2(q-1)}
\end{vmatrix}.
$$
Kowalczyk and Lecko \cite{adam} obtained the sharp bound of second Hankel determinant of $F_f/2$, {\em i.e.,} $H_{2,1}(F_f/2)$ for starlike and convex functions. The problem of computing the sharp bounds of $H_{2,1}(F_f/2)$ has been considered in \cite{vibhuthi} for various subclasses of $\mathcal{S}$. \\[3mm]

Suppose that $ f \in \mathcal{S}$ given by \eqref{f}. Then the second Hankel determinant of $F_f/2$ by using \eqref{gamma}, is given by
\begin{equation}\label{hankel}
	H_{2,1}(F_f/2)=\gamma_1\gamma_3-\gamma_{2}^2=a_2a_4-a_3^2+\frac{1}{12}a_2^4.
\end{equation}
In this paper, we calculate the sharp bounds for $H_{2,1}(F_f/2)$ for functinos in the classes $\mathcal{S}_S^*$ and $\mathcal{K}_S$. We also provide  examples of functions to illustrate these results.

%

\section{Main Results}\label{sec3}
Let $\mathcal{B}_{0}$  denote the class of analytic functions $w : \mathbb{D} \rightarrow \mathbb{D}$ such that $w(0)=0$. Functions in $\mathcal{B}_{0}$ are known as Schwarz functions. A function $w \in \mathcal{B}_{0}$ can be written as a power series 
\begin{equation*}\label{schwarz}
	w(z)=\sum_{n=1}^\infty c_nz^n.
\end{equation*}
For two functions $f$ and $g$ that are analytic in a domain $\mathbb{D}$, we say that the function $f$ is {\em subordinate} to $g$ in
$\mathbb{D}$ and written as $f(z) \prec g(z)$ if there exists a Schwarz function $w \in \mathcal{B}_0$ such that
$$ f(z)=g(w(z)), \quad z \in \mathbb{D}.$$
In particular, if the function $g$ is univalent in $\mathbb{D}$, then $f \prec g$ if, and only if, $f(0)= g(0)$ and $f(\mathbb{D}) \subseteq g(\mathbb{D})$.
\\[3mm]

To prove our results, we need the following lemma for Schwarz functions.

\begin{lemma}\cite{schwarz}\label{lemma1}
	Let $w(z)=c_1z+c_2z^2+ \cdots$ be a Schwarz function. Then 
	$$ |c_1|\leq 1, \, |c_2| \leq 1-|c_1|^2, \, \mbox{ and } |c_3| \leq 1-|c_1|^2-\frac{|c_2|^2}{1+|c_1|}.$$
\end{lemma}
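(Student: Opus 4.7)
The plan is to derive all three bounds from a single Schur-type iteration on the unit disk. Since $w \in \mathcal{B}_0$, the classical Schwarz lemma tells us that $\varphi_0(z) := w(z)/z = c_1 + c_2 z + c_3 z^2 + \cdots$ is analytic on $\mathbb{D}$ and maps it into $\overline{\mathbb{D}}$; in particular, $|\varphi_0(0)| = |c_1| \le 1$, which is the first inequality. If $|c_1| = 1$ then $\varphi_0$ is constant by the maximum principle, forcing $c_2 = c_3 = 0$, so the remaining inequalities are trivial and I may assume $|c_1| < 1$.

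Composing $\varphi_0$ with the disk automorphism $\zeta \mapsto (\zeta - c_1)/(1 - \overline{c_1}\zeta)$, which sends $\varphi_0(0) = c_1$ to $0$, and then dividing by $z$ yields
\[ \varphi_1(z) = \frac{1}{z}\cdot\frac{\varphi_0(z) - c_1}{1 - \overline{c_1}\,\varphi_0(z)}, \]
which is again a self-map of $\mathbb{D}$ into $\overline{\mathbb{D}}$. A routine power-series expansion gives $\varphi_1(0) = c_2/(1-|c_1|^2)$, and $|\varphi_1(0)| \le 1$ rearranges to $|c_2| \le 1 - |c_1|^2$. Iterating the same construction once more from $\varphi_1$, I obtain a further self-map $\varphi_2$ of $\mathbb{D}$ into $\overline{\mathbb{D}}$ satisfying $\varphi_2(0) = \varphi_1'(0)/(1 - |\varphi_1(0)|^2)$. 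Reading off the coefficient of $z$ in $\varphi_1$ directly gives
\[ \varphi_1'(0) = \frac{c_3}{1 - |c_1|^2} + \frac{\overline{c_1}\,c_2^2}{(1 - |c_1|^2)^2}, \]
and the bound $|\varphi_2(0)| \le 1$, after clearing denominators, becomes
\[ \bigl|\,c_3(1 - |c_1|^2) + \overline{c_1}\,c_2^2\,\bigr| \le (1 - |c_1|^2)^2 - |c_2|^2. \]

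The final step is the delicate one. I extract $|c_3|$ from the left-hand side via the reverse triangle inequality, obtaining $|c_3|(1-|c_1|^2) - |c_1|\,|c_2|^2 \le (1 - |c_1|^2)^2 - |c_2|^2$, i.e.\ $|c_3|(1-|c_1|^2) \le (1-|c_1|^2)^2 - (1 - |c_1|)\,|c_2|^2$. Dividing by $1 - |c_1|^2 = (1-|c_1|)(1+|c_1|)$ then produces exactly the claimed bound. The main obstacle is the bookkeeping: one must track the Taylor coefficients through the Schur iteration without sign errors, and confirm that the triangle-inequality orientation leaves a factor $(1-|c_1|)$ on the $|c_2|^2$ term that cancels against the denominator, so that the clean factor $1/(1+|c_1|)$ emerges in the final expression rather than something looser.
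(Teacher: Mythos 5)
Your proof is correct, but note that the paper itself offers no proof of this lemma at all: it is quoted with a citation to Efraimidis \cite{schwarz}, where the first two bounds are the classical Schwarz--Pick inequalities and the third is derived from coefficient inequalities for functions with positive real part. Your Schur-algorithm route is a legitimate, self-contained alternative, and all the computations check out: $\varphi_1(0)=c_2/(1-|c_1|^2)$ and $\varphi_1'(0)=c_3/(1-|c_1|^2)+\overline{c_1}c_2^2/(1-|c_1|^2)^2$ are right, the Schwarz--Pick step $|\varphi_1'(0)|\le 1-|\varphi_1(0)|^2$ yields exactly $\bigl|c_3(1-|c_1|^2)+\overline{c_1}c_2^2\bigr|\le(1-|c_1|^2)^2-|c_2|^2$, and the reverse triangle inequality followed by division by $(1-|c_1|)(1+|c_1|)$ produces the stated bound with the clean $1/(1+|c_1|)$ factor, as you anticipated. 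The only loose end is the degenerate case $|\varphi_1(0)|=1$, i.e.\ $|c_2|=1-|c_1|^2$: there the normalized $\varphi_2(0)$ is not defined, but $\varphi_1$ is then constant by the maximum principle, so $\varphi_1'(0)=0$ and the cleared-denominator inequality holds as $0\le 0$; you handled the analogous degeneracy for $\varphi_0$ explicitly and should say one sentence about this one too. What your approach buys is a uniform mechanism (one Schur step per coefficient) that makes clear where equality can occur at each stage; what the citation buys the authors is brevity, since the lemma is standard.
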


We obtain the following sharp bound for $H_{2,1}(F_f/2)$ for functions in the class $\mathcal{S}_S^*$.

\begin{theorem}\label{thm1}
	Let $f\in \mathcal{S}_S^*$. Then 
	$$
	|H_{2,1}(F_f/2)| \leq \frac{1}{4}.
	$$
	The inequality is sharp.
\end{theorem}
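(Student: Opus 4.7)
The plan is to use the Schwarz-function representation of $\mathcal{S}_S^*$, expand in coefficients, and then apply Lemma~\ref{lemma1} via a two-variable optimization. Since $\tfrac{2zf'(z)}{f(z)-f(-z)}$ has positive real part on $\mathbb{D}$ and equals $1$ at the origin, we may write
$$\frac{2zf'(z)}{f(z)-f(-z)}=\frac{1+w(z)}{1-w(z)}$$
for some $w(z)=c_1 z+c_2 z^2+c_3 z^3+\cdots$ in $\mathcal{B}_0$. Clearing denominators and comparing the coefficients of $z^2,z^3,z^4$ in $2zf'(z)(1-w(z))=(1+w(z))(f(z)-f(-z))$ yields explicit polynomial formulas for $a_2,a_3,a_4$ in $c_1,c_2,c_3$.

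Substituting these into the identity $H_{2,1}(F_f/2)=\gamma_1\gamma_3-\gamma_2^2$ and using \eqref{gamma}, the Hankel determinant reduces to an expression of the form
$$H_{2,1}(F_f/2)=\alpha\, c_1 c_3+\beta\, c_1^2 c_2+\delta\, c_2^2+\varepsilon\, c_1^4$$
with explicit constants. Applying the triangle inequality together with the bounds of Lemma~\ref{lemma1}---crucially the refined estimate $|c_3|\le 1-|c_1|^2-|c_2|^2/(1+|c_1|)$---produces an upper bound depending only on $x:=|c_1|\in[0,1]$ and $y:=|c_2|\in[0,1-x^2]$.

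For the optimization, I expect the bound to be an increasing function of $y$ on $[0,1-x^2]$ (the coefficients of $y$ and $y^2$ should be nonnegative), so the supremum in $y$ is attained at $y=1-x^2$. Substituting and simplifying, the many cross terms should telescope to leave the clean expression $\tfrac{1}{4}\bigl(1-x^2+\tfrac{x^4}{12}\bigr)$, which is visibly $\le \tfrac{1}{4}$ on $[0,1]$, with equality only at $x=0$. For sharpness, the extremum $x=0,\,y=1$ is realized by $w(z)=z^2$; the corresponding identity $\tfrac{2zf'(z)}{f(z)-f(-z)}=\tfrac{1+z^2}{1-z^2}$ forces $f$ to be odd, yielding $f(z)=z/(1-z^2)\in\mathcal{S}^*\subset\mathcal{S}_S^*$, for which $\gamma_1=\gamma_3=0$ and $\gamma_2=\tfrac{1}{2}$, so $H_{2,1}(F_f/2)=-\tfrac{1}{4}$.

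The main obstacle is algebraic rather than conceptual: the coefficient expansion, particularly for $a_4$, involves a triple-product convolution and must be tracked carefully, and the collapse to $1-x^2+\tfrac{x^4}{12}$ in the last step relies on the cancellation of several $x^i y^j$ terms after substituting $y=1-x^2$. It is also essential to use the full strength of Lemma~\ref{lemma1}: the weaker estimate $|c_3|\le 1-|c_1|^2$ does \emph{not} suffice, since the resulting bound exceeds $\tfrac{1}{4}$ for small $x>0$, so the $|c_2|^2/(1+|c_1|)$ correction in $|c_3|$ is what makes the argument work.
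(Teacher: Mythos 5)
Your proposal follows essentially the same route as the paper: the same subordination/Schwarz-function representation, the same coefficient formulas leading to $H_{2,1}(F_f/2)=\tfrac{1}{48}\left(c_1^4+6c_1c_3-12c_2^2-6c_1^2c_2\right)$, the same use of the refined $|c_3|$ bound from Lemma~\ref{lemma1}, and the same extremal function $z/(1-z^2)$; your monotonicity-in-$y$ shortcut is sound (the $y$-derivative of the bound is nonnegative on the region) and your expression $\tfrac{1}{4}\left(1-x^2+\tfrac{x^4}{12}\right)$ is exactly the paper's $F(x,1-x^2)=x^4-12x^2+12$ divided by $48$. One small correction: the inclusion $\mathcal{S}^*\subset\mathcal{S}_S^*$ you invoke for sharpness is false (the paper itself cites Nezhmetdinov--Ponnusamy for $\mathcal{S}^*\nsubseteq\mathcal{S}_S^*$); the function $z/(1-z^2)$ lies in $\mathcal{S}_S^*$ because it is odd and starlike, so that $f(z)-f(-z)=2f(z)$ and the defining quotient reduces to $\tfrac{1}{2}\,zf'(z)/f(z)$.
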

\begin{proof}
	Let $f\in \mathcal{S}_S^*$ be of the form \eqref{f}. Then by the definition of subordination there exists a Schwarz function $w(z)=\sum_{n=1}^\infty c_nz^n$ such that
	\begin{equation}\label{star1}
		\frac{2zf'(z)}{f(z)-f(-z)}=\frac{1+w(z)}{1-w(z)}.
	\end{equation}
	By comparing the coefficients on both sides of \eqref{star1} yields
	\begin{equation}\label{star3}
		\begin{aligned}
			& a_2=c_1, \\[2mm]
			& a_3=c_2+c_1^2, \\[2mm]
			& a_4=\frac{1}{2}\left(c_3+3c_1c_2+2c_1^3 \right).
		\end{aligned}
	\end{equation}
	By substituting the above expression for $a_2$, $a_3$, and $a_4$ in \eqref{hankel} and then further simplification gives
	\begin{equation}\label{star44}
		\begin{aligned}
			H_{2,1}(F_f/2)& =  \gamma_1\gamma_3-\gamma_{2}^2 \\ & = a_2a_4-a_3^2+\frac{1}{12}a_2^4 \\ & =\frac{1}{48}\left(c_1^4+6c_1c_3-12c_2^2-6c_1^2c_2\right).
		\end{aligned}
	\end{equation}
	From \eqref{star44} and  Lemma \ref{lemma1}, we obtain 
	\begin{equation}\label{star5}
		\begin{aligned}
			48|H_{2,1}(F_f/2)| & \leq |c_1|^4+6|c_1| \left(1 - |c_1|^2 - \cfrac{|c_2|^2}{1 + |c_1|}\right)+6|c_1|^2|c_2|+12|c_2|^2. 
		\end{aligned}
	\end{equation}
	Now  writing $x=|c_1|$ and $y=|c_2|$ in \eqref{star5}, we obtain
	\begin{equation}\label{h1}
		48|H_{2,1}(F_f/2)|  \leq F(x,y),
	\end{equation}
	where
	$$
	F(x,y)=x^4+6x \left(1 - x^2 - \cfrac{y^2}{1 + x}\right)+6x^2y+12y^2.
	$$
	In view of Lemma \ref{lemma1}, the region of variability of a pair $(x,y)$ coincides with the set
	$$ \Omega =\{ (x,y): 0\leq x \leq 1 , 0 \leq y \leq 1-x^2 \}.  $$
	Therefore, we need to find the maximum value of $F(x,y)$ over the region $\Omega$.
	The critical points of $F$ satisfies the conditions
	\begin{equation*}
	\begin{aligned}
	& \frac{\partial F}{\partial x} =  4x^3-18x^2+12xy -\cfrac{6y^2}{(1+x)^2}+6=0 \\
	& \frac{\partial F}{\partial y}=x^2+x^3+4y+2xy=0,
	\end{aligned}
	\end{equation*}
	which has no solution in the interior of $\Omega$. Hence the function $F(x,y)$ cannot have a maximum in the interior of $\Omega$. Since $F$ is continuous on a compact set $\Omega$, the maximum of $F$ attains boundary of $\Omega$. On the boundary of $\Omega$, we have 
	\begin{center}
		$F(x,0)=x^4-6x^3+6x \leq 2.4378 $ for $0\leq x \leq 1,$
	\end{center}
	\begin{center}
		$F(0,y)=12y^2 \leq 12$ for $0\leq y \leq 1,$
		
	\end{center}
	and
	\begin{center}
		$F(x,1-x^2)=x^4-12x^2+12 \leq 12$  for $0\leq x \leq 1.$
	\end{center}
	Thus combining all the above cases we obtain
	$$
	\max_{(x,y)\in \Omega} F(x,y)=12
	$$
	and hence from \eqref{h1} we have
	\begin{equation}\label{star4}
		|H_{2,1}(F_f/2)| \leq \frac{1}{4}.
	\end{equation}
	
	To prove the equality in \eqref{star4}, we consider the function
	$$
	f_1(z)=\cfrac{z}{1-z^2}=z+z^3+z^5+\cdots, \quad z \in\mathbb{D}.
	$$
	A simple computation shows that $f_1$ belongs to the class $\mathcal{S}_S^*$ and $|H_{2,1}(F_{f_1}/2)|=1/4$ and hence equality holds in \eqref{star4}. This completes the proof.
\end{proof}
Here we provide an example that associates to Theorem \ref{thm1}.
\begin{example}
	Consider the function
	\begin{equation*}
		f_2(z)= \cfrac{z}{1-z} = z+z^2+z^3+\cdots
	\end{equation*}
	It is easy to see that the function $f$ belongs to the class $\mathcal{S}_S^*$. It is easy to see that
	$$
	|H_{2,1}(F_{f_2}/2)|=\frac{1}{12} \leq \frac{1}{4}.
	$$
\end{example}

In the following result, we estimate the sharp bound for $H_{2,1}(F_f/2)$ for functions in the class $\mathcal{K}_S$.
\begin{theorem}\label{ThmKS}
	Let $f\in \mathcal{K}_S$ be of the form \eqref{f}. Then 
	$$
	|H_{2,1}(F_f/2)| \leq \frac{1}{36}.
	$$
		The inequality is sharp.
\end{theorem}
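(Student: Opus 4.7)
The strategy parallels the proof of Theorem \ref{thm1}. Since $f \in \mathcal{K}_S$, the defining inequality combined with the subordination principle produces a Schwarz function $w(z) = c_1 z + c_2 z^2 + c_3 z^3 + \cdots$ such that
\[
\frac{2(zf'(z))'}{(f(z)-f(-z))'} = \frac{1+w(z)}{1-w(z)}.
\]
Expanding both sides and matching coefficients of $z, z^2, z^3$---using that $f(z)-f(-z)$ is odd, so $(f(z)-f(-z))' = 2 + 6 a_3 z^2 + 10 a_5 z^4 + \cdots$ has only even-degree terms, whereas $(zf'(z))' = 1 + 4 a_2 z + 9 a_3 z^2 + 16 a_4 z^3 + \cdots$---I would obtain
\[
a_2 = \tfrac{1}{2} c_1,\qquad a_3 = \tfrac{1}{3}(c_2 + c_1^2),\qquad a_4 = \tfrac{1}{8}(c_3 + 3 c_1 c_2 + 2 c_1^3),
\]
which are the analogues of \eqref{star3} rescaled by the factors $2, 3, 4$ arising from the extra derivative.

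Substituting these into $H_{2,1}(F_f/2) = \gamma_1\gamma_3 - \gamma_2^2$ via \eqref{gamma} and simplifying, I expect to reach
\[
H_{2,1}(F_f/2) = \frac{1}{2304}\bigl(11 c_1^4 - 20 c_1^2 c_2 + 36 c_1 c_3 - 64 c_2^2\bigr).
\]
The triangle inequality together with Lemma \ref{lemma1}, writing $x = |c_1|$ and $y = |c_2|$, then gives
\[
2304\, |H_{2,1}(F_f/2)| \le G(x,y) := 11 x^4 + 20 x^2 y + 36 x\Bigl(1 - x^2 - \frac{y^2}{1+x}\Bigr) + 64 y^2
\]
on $\Omega = \{(x,y) : 0 \le x \le 1,\ 0 \le y \le 1 - x^2\}$, so the task reduces to showing $\max_{\Omega} G = 64$.

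To carry out that maximization, I would first check that the system $\partial_x G = \partial_y G = 0$ has no solution in the interior of $\Omega$, forcing the maximum onto $\partial\Omega$. On $x = 0$ one has $G(0, y) = 64 y^2 \le 64$; on $y = 0$ the polynomial $G(x, 0) = 11 x^4 - 36 x^3 + 36 x$ stays well below $64$ on $[0,1]$; and on the arc $y = 1 - x^2$ the key identity $(1-x^2)^2/(1+x) = (1-x)(1-x^2)$ collapses $G(x, 1-x^2)$ to the biquadratic $19 x^4 - 72 x^2 + 64$, whose derivative $4x(19 x^2 - 36)$ has no zero in $(0,1)$, so its maximum $64$ on $[0,1]$ is attained at $x=0$. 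This forces $|H_{2,1}(F_f/2)| \le 64/2304 = 1/36$.

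For sharpness I would take $f_0(z) = \tfrac{1}{2}\log\tfrac{1+z}{1-z} = z + \tfrac{z^3}{3} + \tfrac{z^5}{5} + \cdots$. Since $zf_0'(z) = z/(1-z^2) \in \mathcal{S}_S^*$ and $f \in \mathcal{K}_S$ iff $zf' \in \mathcal{S}_S^*$, the function $f_0$ lies in $\mathcal{K}_S$; its coefficients $a_2 = 0,\ a_3 = 1/3,\ a_4 = 0$ yield $\gamma_1 = \gamma_3 = 0$ and $\gamma_2 = 1/6$, whence $|H_{2,1}(F_{f_0}/2)| = 1/36$. The main obstacle I foresee is the boundary analysis on $y = 1-x^2$: the denominator $1+x$ makes $G(x, 1-x^2)$ look like an unwieldy rational expression, and it is precisely the factorization $(1-x^2)^2/(1+x) = (1-x)(1-x^2)$ that collapses it into a clean biquadratic whose maximum is visibly $64$. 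Everything else is a routine adaptation of the argument of Theorem \ref{thm1}.
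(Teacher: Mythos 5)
Your proposal is correct and follows essentially the same route as the paper: the same subordination, the same coefficient formulas $a_2=\tfrac12 c_1$, $a_3=\tfrac13(c_2+c_1^2)$, $a_4=\tfrac18(c_3+3c_1c_2+2c_1^3)$, the same function $G(x,y)$ maximized over $\Omega$ with maximum $64$ on the boundary, and the same extremal function $\tfrac12\log\tfrac{1+z}{1-z}$. In fact your intermediate identity $H_{2,1}(F_f/2)=\tfrac{1}{2304}\bigl(11c_1^4-20c_1^2c_2+36c_1c_3-64c_2^2\bigr)$ is stated more cleanly than the paper's corresponding display, which already has the Lemma bound substituted in.
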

\begin{proof}
	Let  $f(z)= z+\sum_{n=2}^{\infty}a_n z^n$ be a function in $\mathcal{K}_S$, then there exists a Schwarz function $w(z)=\sum_{n=1}^\infty c_nz^n$ such that
	
	\begin{equation}\label{convex1}
		\frac{2(zf'(z))'}{(f(z)-f(-z))'}=\frac{1+w(z)}{1-w(z)}.
	\end{equation}
	First note that by equating coefficients in \eqref{convex1} we have,
	\begin{equation}\label{convex3}
		\begin{aligned}
			& a_2=\frac{1}{2}c_1, \\[2mm]
			& a_3= \frac{1}{3}\left( c_2+c_1^2 \right), \\[2mm]
			& a_4=\frac{1}{8}\left(c_3+3c_1c_2+2c_1^3 \right).
		\end{aligned}
	\end{equation}
	A simple computation using \eqref{hankel} gives,
	\begin{equation}
		H_{2,1}(F_f/2) = \frac{1}{2304}\left(11c_1^4+36c_1 \left(1 - c_1^2 - \cfrac{c_2^2}{1 + c_1}\right)+20c_1^2c_2+64c_2^2\right).
	\end{equation}
	Following the same method as used in the proof of Theorem \ref{thm1}, we obtain
	\begin{equation}\label{h2}
		\begin{aligned}
			|H_{2,1}(F_f/2)|\leq \frac{1}{2304}\left(11|c_1|^4+36|c_1| \left(1 - |c_1|^2 - \cfrac{|c_2|^2}{1 + |c_1|}\right)+20|c_1|^2|c_2|+64|c_2|^2\right),
		\end{aligned}
	\end{equation}
	where
	$$ 0 \leq |c_1| \leq 1 \text{ and }0 \leq |c_2| \leq 1-|c_1|^2 .$$
	Now by replacing $|c_1|$ by $x$ and $|c_2|$ by $y$ in \eqref{h2} gives
	\begin{equation}\label{h10}
		2304|H_{2,1}(F_f/2)|  \leq G(x,y),
	\end{equation}
	where
	\begin{equation*}
		G(x,y)= 11x^4+36x \left(1 - x^2 - \cfrac{y^2}{1 + x}\right)+20x^2y+64y^2.
	\end{equation*}
	In view of Lemma \ref{lemma1}, the region of variability of a pair $(x,y)$ coincides with the set
	$$ \Omega =\{ (x,y): 0\leq x \leq 1 , 0 \leq y \leq 1-x^2 \}. $$
	Thus we need to find the maximum value of $G(x,y)$ over the region $\Omega$.
	The critical points of $G$ satisfies the conditions
	$$
	\frac{\partial G}{\partial x}=44x^3-108x^2+40xy -\cfrac{36y^2}{(1+x)^2}+36=0,
	$$
	and
	$$
	\frac{\partial G}{\partial y}=5x^2+5x^3+32y+14xy=0,
	$$
	which has no solution in the interior of $\Omega$. By using the elementary calculus, we can show that the maximum of $G(x,y)$ should exists on the boundary of $\Omega$.
	It is easy to see that on the boundary line $x=0$ and $ 0 \leq y \leq 1$, we have $G(0,y)=64y^2$ and its maximum on this line is equal to $64$.
	Similarly, on the boundary line $y=0$ and  $0 \leq x \leq 1,$ we have $G(x,0)=11x^4-36x^3+36x$ and its maximum on this line is $15.512$.
	Finally, on the boundary curve $y=1-x^2$ and $0 \leq x \leq 1$, we have $G(x,1-x^2)=19x^4-72x^2+64$ and its maximum on this curve is $64$.
	Thus, combining all the above cases yields
	$$
	\max_{(x,y)\in \Omega} G(x,y)=64
	$$
	and hence from \eqref{h10} we obtain
	\begin{equation}\label{convex4}
		|H_{2,1}(F_f/2)| \leq \frac{1}{36}.
	\end{equation}
	
	For the sharpness of the inequality \eqref{convex4} we consider the function
	$$
	f_3(z)=\cfrac{1}{2}\log\cfrac{1+z}{1-z}=z+\cfrac{z^3}{3}+\cfrac{z^5}{5}+\cdots$$
	which belongs to the class $\mathcal{K}_S$. A simple computation shows that $|H_{2,1}(F_{f_3}/2)|=1/36$ and hence the inequality in \eqref{convex4} is sharp. This completes the proof.
\end{proof}

In the following example we construct a function that agree with Theorem \ref{ThmKS}.

\begin{example}
	Consider the function
	\begin{equation*}
		f_4(z)=- \log (1-z)= z+\cfrac{z^2}{2}+\cfrac{z^3}{3}+\cdots .
	\end{equation*}
A simple compuation shows that
$$
{\rm Re\,}\left( \cfrac{(zf_4'(z))'}{(f_4(z)-f_4(-z))'} \right) = \cfrac{1}{2} \, {\rm Re\,}\left( \cfrac{1+z}{1-z} \right) > 0.
$$	
and hence the function $f_4\in \mathcal{K}_S$. It is easy to see that
	$$
	|H_{2,1}(F_{f}/2)|= \frac{11}{576} \leq \frac{1}{36}.
	$$
\end{example}

\bigskip
\noindent
{\bf Acknowledgment.}
The first author thanks SERB-CRG, the second author thanks IIT Bhubaneswar for providing Institute Post Doctoral Fellowship, and the third author's research work is supported by CSIR-UGC.


\begin{thebibliography}{99}
	
	\bibitem{vasu2017}
	{\sc M. F. Ali} and {\sc V. Allu}, On logarithmic coefficients of some close-to-convex functions, {\it Proc. Amer. Math. Soc.} {\bf 146} (2017), 1131--1142.
	
	
	\bibitem{Vasu-20177}
	{\sc M. F. Ali} and {\sc V. Allu}, Logarithmic coefficients of some close-to-convex functions, {\it Bull. Aust. Math. Soc.} {\bf 95} (2017), 228-- 237.
	
	\bibitem{ALT} 
	{\sc V. Allu, A. Lecko,} and {\sc D. K. Thomas},
	{Hankel, Toeplitz and Hermitian-Toeplitz Determinants for Ozaki Close-to-convex Functions}, {\em Medi. J. Math.} (to appear).
	
	\bibitem{vibhuthi}
	{\sc V. Allu} and {\sc V. Arora}, Second Hankel determinant of logarithmic coefficients of certain analytic functions,
	arXiv: 2110.05161.
	
	
	\bibitem{De Branges-1985}
	{\sc L. Bieberbach,} A proof of the Bieberbach conjecture, {\it Acta Math.} {\bf 154} (1985), 137--152.
	
	\bibitem{cho}
	{\sc N. E. Cho}, {\sc B. Kowalczyk}, {\sc O. Kwon}, {\sc A. Lecko},  and {\sc Y. Sim}, On the third logarithmic coefficient in some subclasses of close-to-convex functions, {\it Rev. R. Acad. Cienc. Exactas Fís. Nat.} {\bf 114} {2020} DOI: 10.1007/s13398-020-00786-7.
	
	\bibitem{das}
	{\sc R. N. Das} and {\sc P. Singh}, On subclasses of schlicht mapping, {\it Indian J. Pure Appl. Math.} {\bf{8}} (1977), 864--872.
	
	\bibitem{Duren-book-1983}
	{\sc P. L. Duren}, {\it Univalent functions} (Grundlehren der mathematischen Wissenschaften 259, New York, Berlin, Heidelberg, Tokyo),
	Springer-Verlag, (1983).
	
	\bibitem{schwarz}
	{\sc I. Efraimidis},  A generalization of Livingston's coefficient inequalities for functions with positive real part, {\it J. Math. Anal. Appl.} {\bf 435} (2016), 369--379. 
	
	\bibitem{Elhos-1996}
	{\sc M. M. Elhosh}, On the logarithmic coefficients of close-to-convex functions, {\it J. Aust. Math. Soc.} {\bf A 60}(1996), 1--6.
	
	\bibitem{Girela-2000}
	{\sc D. Girela}, Logarithmic coefficients of univalent functions, {\it Ann. Acad. Sci. Fenn. Math.} {\bf 35} (2010), no.2, 337--350.
	
	
	
	\bibitem{adam}
	{\sc B. Kowalczyk} and {\sc A. Lecko}, Second hankel determinant of logarithmic coefficients of convex and starlike functions, {\it Bull. Aust. Math. Soc.} 
	DOI: 10.1017/S0004972721000836.
	
	
	\bibitem{milin}
	{\sc I. M. Milin}, Univalent functions and orthonormal systems, Translations of Mathematical Monographs, Volume 49 (1977).
	
	
	
	\bibitem{samy}
	{\sc I. R. Nezhmetdinov} and {\sc S. Ponnusamy,} On the class of univalent functions starlike
	with respect to N-symmetric points, {\it Hokkaido Math. J.} {\bf{31}}(1) (2002), 61--77.
	
	
	\bibitem{Pom66} {\sc Ch. Pommerenke},
	{On the coefficients and Hankel determinants of univalent functions},
	{\em J. Lond. Math. Soc.} \textbf{41} (1966), 111--122.
	
	\bibitem{Pom67} {\sc Ch. Pommerenke},
	{On the Hankel determinants of Univalent functions},
	{\em Mathematika} \textbf{14} (1967), 108--112.
	
	\bibitem{PSW20}{\sc S. Ponnusamy, N. L. Sharma,} and {\sc K. J. Wirths},
	{Logarithmic coefficients problems in families related to starlike and convex functions},
	{\em J. Aust. Math. Soc.} {\bf 109} (2020), 230--249.
	
	\bibitem{vasu-2018}
	{\sc U. Pranav Kumar} and {\sc A. Vasudevarao}, Logarithmic coefficients for certain subclasses of close-to-convex functions, {\it Monatsh. Math.} {\bf 187} (2018), 543--563.
	
	\bibitem{sym}
	{\sc K. Sakaguchi}, On a certain univalent mapping, {\it J. Math. Soc. Japan} {\bf{11}} {(1959)}, 72--75.
	
	
	\bibitem{sim21}
	{\sc Y. J. Sim, A. Lecko,} and {\sc D. K. Thomas}, The second Hankel determinant for strongly convex and Ozaki close-to-convex functions, {\it Ann. Mat. Pura. Appl.} {\bf 200} (2021), 2515--2533.
	
	
	\bibitem{Thomas-2016}
	{\sc D. K. Thomas}, On the logarithmic coefficients of close-to-convex functions, {\it Proc. Amer. Math. Soc.} {\bf 144} (2016), 1681--1687.
	
	
	
	
	
	
	
	\bibitem{pawel}
	{\sc P. Zaprawa}, Initial logarithmic coefficients for functions starlike with respect to symmetric points, {\it Bol. Soc. Mat. Mex.} {\bf 27} (2021) DOI: 10.1007/s40590-021-00370-y.
	
\end{thebibliography}
\end{document}